\newtheorem{thm}{Theorem}[section]
\newtheorem{cor}[thm]{Corollary}
\newtheorem{lem}[thm]{Lemma}
\newtheorem{prop}[thm]{Proposition}
\theoremstyle{definition}
\newtheorem{defn}[thm]{Definition}
\newtheorem{rem}[thm]{Remark}
\newcommand {\Coker} {{\rm Coker}}
\newcommand {\Hom} {{\rm Hom}}
\newcommand {\Ext} {{\rm Ext}}
\newcommand {\grade} {{\rm grade}}
\newcommand {\depth} {{\rm depth}}
\newcommand {\Ann} {{\rm Ann}}
\newcommand {\m} {{\mathfrak m}}
\newcommand {\p} {{\mathfrak p}}
\newcommand {\q} {{\mathfrak q}}
\newcommand {\Supp} {{\rm Supp}}
\newcommand {\codim} {{\rm codim}}
\newcommand {\dimproj} {{\rm projdim}}
\newcommand {\het} {{\rm ht}}
\newcommand {\N} {{\mathbb N}}
\newcommand {\0} {{\bf 0}}
\newcommand{\igno} [1] {} 
\newcommand {\Sing} {{\rm Sing}}
\newcommand {\Sp} {{\rm Spec}}
\newcommand {\K} {{\mathbb K}}
\newcommand {\Pol}{\K[x_1,\ldots,x_s]}
\newcommand{\Om}{\Omega^{1}_{R/K}}
\newcommand{\Omr}{\Omega^{1}_{R/\K}}
\newcommand{\Omn}{\Omega^{(n)}_{R/K}}
\newcommand{\Omk}{\Omega^{(n)}_{R/\K}}
\newcommand{\Oma}{\Omega^{(n)}_{A_{\p}/\K}}
\newcommand{\Omw}{\Omega^{(n)}_{W/\K}}
\newcommand {\Ap} {A_{\p}}
\begin{document}

\title[$k$-torsion of modules of high order differentials]{On the $k$-torsion of the module of differentials of order $n$ of hypersurfaces}
 
\author{Hern\'an de Alba}
\author{Daniel Duarte}
\email{H. de Alba, hdealbaca@conacyt.mx}
\email{D. Duarte (corresponding author), aduarte@uaz.edu.mx}
\address{Universidad Aut\'onoma de Zacatecas - CONACYT, Calzada Solidaridad y Paseo de la Bufa, Zacatecas, Zac. 98000, Mexico}
 
\subjclass[2010]{13N05,13D07,13C12}
\keywords{$k$-torsion, high order differentials, hypersurfaces}
\thanks{H. de Alba was supported by CONACyT grant A1-S-30482}
\thanks{D. Duarte was supported by CONACyT grant 287622}
\date{\today}
\dedicatory{}
\commby{}

\begin{abstract} 
We characterize the $k$-torsion freeness of the module of differentials of order $n$ of a point of a hypersurface 
in terms of the singular locus of the corresponding local ring.
\end{abstract}

\maketitle



\section{Introduction}

The module of K\"ahler differentials of a ring is a classical object in commutative algebra. Recall that for a $K$-algebra $R$,
the module of K\"ahler differentials is defined as the quotient $\Omega^1_{R/K}:=I_R/I_R^2$, where $I_R$ is the kernel of the 
multiplication map $R\otimes_K R\rightarrow R$. More generally, the module of K\"ahler differentials of order $n$ can be defined as
$\Omega^{(n)}_{R/K}:=I_R/I_R^{n+1}$ (see, for instance, \cite{G,N,O}).

It is well-known that the module of differentials can be used to detect properties of the ring. For instance, under some hypothesis, 
the regularity of the localization of a finitely generated algebra is equivalent to the freeness of its module of differentials. An analogous 
statement holds for the module of high order differentials  (this was proved for hypersurfaces in \cite{BD} and, in a 
more general context, in \cite{BJNB}). 

We are interested in studying other properties of certain rings that can be detected through its module of differentials. Let $V$ be an
affine variety over a perfect field $\K$. Suppose that $V$ is locally, at some point $P\in V$, a complete intersection. Denote as $R$
the corresponding local ring. It was proved by J. Lipman that $V$ being non-singular at $P$ in codimension 1 (resp. in codimension 2) 
is equivalent to the torsion freeness (resp. reflexiveness) of $\Omr$ (see \cite{L}). It was proved that the 
first statement of Lipman's theorem also holds for the module of high order differentials in the case of hypersurfaces (see \cite{BD}).

There is a general notion of $k$-torsion freeness for any $k\in\N$, $k\geq1$, that generalizes the notions of torsion freeness and 
reflexiveness (see \cite{AB} or section \ref{sect k tors} below). The main goal of this paper is to prove that $k$-torsion freeness of 
the module of high order differentials of a hypersurface can be characterized in terms of the singular locus. 

Our approach to the problem is essentially the same as Lipman's. After making a careful analysis of his proof, we realized that part
of the arguments were valid in a much more general situation. In addition, a key ingredient in Lipman's proof is the fact that 
the projective dimension of the module of differentials of a reduced locally complete intersection is less or equal than one. An analogous 
statement was proved in \cite{BD} for the module of high order differentials of hypersurfaces, allowing us to carry on with
Lipman's strategy. Finally, the last ingredient we need for our proof is a criterion of regularity for hypersurfaces in terms of the module 
of high order differentials.


\section{Modules of K\"ahler differentials}

In this paper, all rings we consider are assumed to be commutative and with a unit element. 

Let $R$ be a $K$-algebra. Denote $I_R$ the kernel of the homomorphism $R\otimes_{K}R\rightarrow R$, $r\otimes s\mapsto rs$. 
Giving structure of $R$-module to $R\otimes_{K}R$ by multiplying on the left, define the $R$-module 
$$\Omn:=I_R/I_R^{n+1}.$$

\begin{defn}\cite[Definition 1.5]{O}
The $R$-module $\Omn$ is called the \textit{module of K\"ahler differentials of order n} of $R$ over $K$ or the \textit{module of high 
order K\"ahler differentials}. For $n=1$, this is just the usual module of K\"ahler differentials of $R$.
\end{defn}

A classical result states that, under some hypothesis, the localization of a finitely generated algebra $R$ is regular if and only if $\Om$ is free 
(see, for instance, \cite[Chapter II, Theorem 8.8]{H}). Another result in this direction is the following theorem due to J. Lipman (the statement 
(1) was also proved by S. Suzuki in \cite{S}).

\begin{thm}\cite[Proposition 8.1]{L}\label{thm L}
Let $R$ be the local ring of a point $P$ on an affine variety $V$ over a perfect field $\K$. Assume that $V$ is locally, at $P$, 
a complete intersection. Then
\begin{enumerate}
\item $\Omr$ is torsion free if and only if $V$ is non-singular in codimension 1 at $P$.
\item $\Omr$ is reflexive if and only if $V$ is non-singular in codimension 2 at $P$.
\end{enumerate}
\end{thm}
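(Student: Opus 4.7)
The plan is to follow Lipman's own classical strategy: reduce torsion-freeness and reflexivity of $\Omr$ to local freeness of $\Omr$ at primes of small codimension, and then use a Jacobian-type criterion to translate such freeness back into non-singularity of $V$. First, since $V$ is locally a complete intersection at $P$, I would write $R = S/(f_1,\ldots,f_r)$, where $S$ is a regular local $\K$-algebra essentially of finite type over $\K$ and $f_1,\ldots,f_r$ is a regular sequence. The conormal exact sequence
\[
0 \longrightarrow (f)/(f)^2 \longrightarrow \Omega^{1}_{S/\K}\otimes_S R \longrightarrow \Omr \longrightarrow 0
\]
then exhibits $\Omr$ as the cokernel of a map between finitely generated free $R$-modules, so $\pd_R \Omr \leq 1$. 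This is the structural input that controls everything downstream.

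Next, because $\K$ is perfect, a Jacobian / Zariski--Lipman style criterion gives that for every prime $\p$, the local ring $R_{\p}$ is regular (equivalently, $V$ is non-singular at $\p$) if and only if $\Omega^{1}_{R_{\p}/\K}$ is $R_{\p}$-free. I would then invoke the Auslander--Bridger description of torsion-freeness and reflexivity via the transpose: a finitely generated $M$ is torsion-free iff $\Ext^{1}_R(\mathrm{Tr}\,M,R)=0$, and reflexive iff also $\Ext^{2}_R(\mathrm{Tr}\,M,R)=0$. When $\pd_R M \leq 1$, these Ext vanishings can be translated, via Auslander--Buchsbaum applied at each localization, into the statement that $M_{\p}$ is $R_{\p}$-free at every prime $\p$ with $\depth R_{\p} \leq 1$ (for torsion-freeness) or $\depth R_{\p} \leq 2$ (for reflexivity); since $R$ is a complete intersection, hence Cohen--Macaulay, depth equals codimension here.

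Putting the three ingredients together, $\Omr$ is torsion-free iff $\Omega^{1}_{R_{\p}/\K}$ is free for all $\p$ of codimension $\leq 1$, iff $R_{\p}$ is regular for all such $\p$, iff $V$ is non-singular in codimension $1$ at $P$; the reflexivity statement follows verbatim with $2$ in place of $1$. The main obstacle I anticipate is the third step: one must carefully tie the abstract Ext vanishing conditions on $\mathrm{Tr}\,\Omr$ to local freeness statements for $\Omr$ itself, correctly handling that $R$ need not be a domain, that $\Omr$ may a priori have embedded associated primes, and tracking the interplay between depth, codimension, and projective dimension through Auslander--Buchsbaum. Once this local criterion is pinned down, the conormal sequence and the Jacobian criterion assemble the theorem almost automatically, and the same architecture is what one would want to adapt in order to treat $\Omk$ and general $k$-torsion in later sections.
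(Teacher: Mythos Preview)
Your proposal is correct and follows essentially the same route the paper attributes to Lipman and then abstracts in Section~\ref{sect k tors}: use the conormal sequence to get $\pd_R\Omr\le1$, recast torsion-freeness/reflexivity as Ext-vanishing for the Auslander--Bridger transpose $D(\Omr)$, turn this into a depth condition on $\Supp(\Ext^1_R(\Omr,R))$, identify that support with $\Sing(R)$ via the Jacobian criterion, and pass from depth to codimension by Cohen--Macaulayness. The only point where the paper is more concrete than your sketch is precisely the step you flag as the main obstacle: instead of appealing to Auslander--Buchsbaum, the paper shows $\Ext^0_R(D(M),R)=0$ (Lemma~\ref{tor0-ext}, using $D(M)_Q=0$), so that $k$-torsion freeness becomes $\grade(D(M))\ge k+1$, and then uses Lemma~\ref{ext1 free} (for $\pd\le1$, $\Ext^1(M,R)=0$ iff $M$ is free) to identify $\Supp(D(M))$ with the non-free locus.
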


In the statement of the theorem, \textit{non-singular in codimension i at P} means that $\codim(R/\p)\geq i+1$, for all $\p\in\Sing(R)$, 
where $\codim(R/\p)=\dim R-\dim R/\p$ and $\Sing(R)=\{\p\in\Sp(R)|R_{\p}\mbox{ is not regular}\}$.

The first statement of the theorem was generalized to the module of high order K\"ahler differentials of a hypersurface, following the strategy in \cite{S}.

\begin{thm}\cite[Theorem 4.3]{BD}\label{thm BD}
Let $R$ be the local ring of a point $P$ on an irreducible hypersurface $W$ over a perfect field $\K$. Then $\Omk$ is torsion free 
if and only if $W$ is normal at $P$.
\end{thm}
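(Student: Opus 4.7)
The plan is to adapt the classical Lipman--Suzuki strategy using the two hypersurface-specific results highlighted in the introduction: the regularity criterion for hypersurfaces, asserting that $R_\p$ is regular if and only if $(\Omk)_\p$ is free over $R_\p$, together with the bound $\pd_R(\Omk)\leq 1$ proved in \cite{BD}. Since $R$ is Cohen--Macaulay (every hypersurface is) and a domain (as $W$ is irreducible), Serre's normality criterion collapses to: $R$ is normal if and only if $R_\p$ is regular for every prime $\p$ with $\het\p=1$. Moreover, $R$ being a domain, $\Omk$ is torsion free if and only if $\mathrm{Ass}_R(\Omk)\subseteq\{(0)\}$, equivalently $\depth_{R_\p}(\Omk)_\p\geq 1$ for every nonzero prime $\p$. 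In this way both sides of the theorem are recast as statements about the depth of $\Omk$ at nonzero primes.

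For the implication $(\Leftarrow)$, assume $R$ is normal and fix a nonzero prime $\p$. If $\het\p=1$, then $R_\p$ is regular, so $(\Omk)_\p$ is free and hence of depth $\depth R_\p=1$. If $\het\p\geq 2$, localization preserves the bound $\pd_{R_\p}(\Omk)_\p\leq 1$, and Auslander--Buchsbaum gives $\depth_{R_\p}(\Omk)_\p\geq \depth R_\p-1\geq 1$. In all cases $\p\notin \mathrm{Ass}_R(\Omk)$, so $\Omk$ is torsion free. For the converse $(\Rightarrow)$, suppose $R$ is not normal, so there exists $\p$ with $\het\p=1$ and $R_\p$ non-regular. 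Then $(\Omk)_\p$ is not free, hence $\pd_{R_\p}(\Omk)_\p=1$, and Auslander--Buchsbaum on the one-dimensional Cohen--Macaulay ring $R_\p$ forces $\depth_{R_\p}(\Omk)_\p=0$. Thus $\p R_\p\in \mathrm{Ass}_{R_\p}(\Omk)_\p$, which pulls back to $\p\in \mathrm{Ass}_R(\Omk)$. Since $\p\neq(0)$ and $R$ is a domain, $\p$ contains a non-zero-divisor of $R$ that annihilates a nonzero element of $\Omk$, contradicting torsion freeness.

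The chain of implications is fairly short once the hypersurface inputs are in hand. The principal subtlety I anticipate is verifying that these two inputs actually survive localization at an arbitrary prime of $R$: the bound on projective dimension is automatic, whereas for the regularity criterion one must observe that if $R$ is a hypersurface, say $R=S/(f)$ with $S$ regular local, then the localization $R_\p$ is the quotient of a localization of $S$ by the image of $f$, hence is again a hypersurface, and the criterion of \cite{BD} applies at each $R_\p$. I expect this localization-compatibility check, rather than the depth estimates themselves, to be the only delicate point of the argument.
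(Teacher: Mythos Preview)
The paper does not prove this statement itself: Theorem~\ref{thm BD} is quoted from \cite{BD} without proof. The paper does, however, recover it as the case $k=1$ of its main result, Theorem~\ref{thm kBD}, and that route is genuinely different from yours. There one identifies $D(\Omk)=\Coker(\varphi^*)=\Ext^1_R(\Omk,R)$, proves $\Supp(\Ext^1_R(\Omk,R))=\Sing(R)$ (Corollary~\ref{sing}), and then applies the general grade characterization of $k$-torsion freeness from Theorem~\ref{k torsion}. Your argument is correct and more direct for $k=1$: you bypass the Auslander--Bridger transpose and the $\Ext^i(D(M),R)$ machinery entirely, instead reading off $\depth_{R_\p}(\Omk)_\p$ from Auslander--Buchsbaum at each prime. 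Both arguments rest on the same two hypersurface inputs ($\pd\leq 1$ and the regularity criterion), but yours is more elementary, while the paper's packaging handles all $k\geq 1$ uniformly.

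One caveat on your anticipated ``delicate point'': the regularity criterion in \cite{BD} is stated for local rings at \emph{closed} points of an affine hypersurface over $\K$, and although $R_\p$ is a quotient of a regular local ring by one element, it is not literally such a local ring, so the sentence ``the criterion of \cite{BD} applies at each $R_\p$'' does not quite close the gap. The paper supplies exactly what you need: Proposition~\ref{crit reg} extends the criterion to arbitrary primes via a spreading-out argument, and the end of the proof of Corollary~\ref{sing} verifies that freeness of $(\Omk)_\p$ automatically has the correct rank $L-1$. With those two facts in hand, your depth argument goes through as written.
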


In the next section we recall the notion of $k$-torsion freeness of an $R$-module, for any positive integer $k$. If $R$ is Noetherian 
and reduced, then the notions of torsion freeness and reflexiveness correspond, respectively, to $1$-torsion freeness and $2$-torsion 
freeness. Our main goal in this paper is to generalize Theorem \ref{thm BD} to apply to $k$-torsion freeness, for any $k\geq1$.


\section{A general theorem on $k$-torsion freeness}\label{sect k tors}

In this section we recall the notion of $k$-torsion freeness of a module. Then we give a characterization of this notion for modules
having projective dimension less or equal than 1. 

Let $R$ be a Noetherian ring and let $M$ be an $R$-module. The dual of $M$, denoted by $M^*$, is the module $\Hom_R(M,R)$. The bidual of $M$ is denoted by $M^{**}$. The bilinear map $\phi:M\times M^*\rightarrow R$ defined by $\phi(m,\varphi)=\varphi(m)$ induces an $R$-homomorphism $f:M\rightarrow M^{**}$, given by $f(m)=\phi(m,\cdot)$. For a given $R$-homomorphism $\varphi:M\rightarrow N$, we denote as $\varphi^*$ the induced map $N^*\rightarrow M^*$.  

Let us suppose that  $M$ is a finite $R$-module, i.e., $M$ is finitely generated. Since $R$ is Noetherian, $M$ is finitely presented, i.e., 
there exists an exact sequence 
$$P_1\stackrel{\varphi}\rightarrow P_0\rightarrow M\rightarrow \0,$$
where $P_0, P_1$ are finite free $R$-modules. Let $D(M):=\Coker(\varphi^*)$, which is known as the Auslander transpose of $M$. In \cite{Aus} it is shown that the previous sequence induces the following exact sequence:
\begin{equation}\label{fund seq}
\0\rightarrow \Ext^1_R(D(M),R)\rightarrow M\stackrel{f}\rightarrow M^{**}\rightarrow \Ext^2_R(D(M),R)\rightarrow \0.
\end{equation}
It is proved in \cite{AB} that for any $i\in\N$, $\Ext^i_R(D(M),R)$ depends only on $M$ and not on the particular presentation $P_1\rightarrow P_0\rightarrow M\rightarrow  \0$, where $P_0$ and $P_1$ are projective $R$-modules.

\begin{rem}
Recall that an $R-$module $M$ is \textit{torsionless} if $f$ is injective and that $M$ is \textit{reflexive} if $f$ is an isomorphism.  
Let $Q$ be the total quotient ring of $R$. Then $M$ is called {\it torsion free} if the natural map $\theta:M\rightarrow M_{Q}$ is injective, where $M_Q:=M\otimes_R Q$. It is known that $\ker(\theta)\subset\ker(f)$. Thus, the concept of torsionless implies the concept of torsion free. If $R$ is Gorenstein and has no embedded primes then the concepts are equivalent (see \cite[Theorem (A.1)]{Va}).
\end{rem}

In view of (\ref{fund seq}), torsionless and reflexiveness are respectively equivalent to $\Ext^1_R(D(M),R)=\0$ and $\Ext^1_R(D(M),R)=\Ext^2_R(D(M),R)=\0$. This leads us to the following general notion of $k$-torsion freeness.

\begin{defn}\cite{AB}
Let $k\in\N$, $k\geq1$. We say that the $R$-module $M$ is {\it $k$-torsion free} if $\Ext^i_R(D(M),R)=\0$, for $i\in\{1,\dots,k\}$.
\end{defn}

We want to study the $k$-torsion freeness of modules having projective dimension less or equal than one. For that, we need to
recall some results concerning the grade and depth of modules.

Let $R$ be a Noetherian ring, $M$ be a finite $R$-module and $I$ be an ideal of $R$. Recall that the \textit{grade of the ideal} $I$ \textit{ove}r $M$, denoted as $\grade(I,M)$, is the maximal size of a $M$-regular sequence in $I$. It is known 
that $\grade(I,M)$ can be computed in the following way (see \cite[Theorem 1.2.5]{BH}):
$$\grade (I,M)=\min\{i\in \N:\Ext_R^i(R/I, M)\neq\0\}.$$
We also define $\grade(M):=\min\{i\in\N:\Ext^i_R(M,R)\neq\0\}$. In addition, for a local ring $(R,\m)$ we denote $\depth(M):=\grade(\m,M)$. Then, by \cite[Proposition 1.2.10]{BH} we have
\begin{equation}\label{grade-depth}
\grade(M)=\grade(\Ann(M),R)=\min\{\depth(R_{\p}):\p \in \Supp(M)\}.
\end{equation}
We also need some facts regarding Cohen-Macaulay rings. If $(R,\m)$ is a finite-dimensional local Noetherian ring, then $\het(\p)\geq\depth(R)-\dim(R/\p)$. Moreover, if $R$ is Cohen-Macaulay, from this inequality we deduce that 
\begin{equation}\label{depth-codim}
	\depth(R_\p)=\dim R-\dim(R/\p)=\codim(R/\p).
\end{equation}

With these tools at hand, now we can give a characterization of $k$-torsion freeness for modules having projective dimension less or equal than one. This characterization is based on part of the proof of Lipman's theorem \ref{thm L}.

\begin{lem}\label{tor0-ext}
Let $M$ be an $R$-module and $Q$ be the total quotient ring of $R$. If $M_Q=\0$, then $\Ext_R^0(M,R)=\0$.
\end{lem}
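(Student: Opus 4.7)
The plan is to unwind the definitions and exploit directly what it means for an element of $M$ to vanish in the localization $M_{(Q)}$. Note that $\Ext_R^0(M,R)=\Hom_R(M,R)$, so it suffices to show that every $R$-homomorphism $\varphi:M\to R$ is the zero map. Also, $Q=S^{-1}R$ where $S$ is the set of nonzerodivisors of $R$, and $M_{(Q)}=S^{-1}M$.

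Fix $\varphi\in\Hom_R(M,R)$ and $m\in M$. The assumption $M_{(Q)}=\0$ says in particular that $m/1=0$ in $S^{-1}M$, which by the standard criterion for vanishing in a localization means there is a nonzerodivisor $s\in S$ with $sm=0$ in $M$. Applying $\varphi$ and using $R$-linearity gives $s\,\varphi(m)=\varphi(sm)=0$ in $R$. Since $s$ is a nonzerodivisor of $R$, we conclude $\varphi(m)=0$. As $m\in M$ was arbitrary, $\varphi=0$, and hence $\Hom_R(M,R)=\0$.

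There is essentially no obstacle: the argument is a one-line application of the definition of the total ring of fractions, and it does not require any finite generation or finite presentation hypothesis on $M$ (which would be needed if one instead tried to argue via the isomorphism $\Hom_R(M,R)\otimes_R Q\cong \Hom_Q(M_{(Q)},Q)$). The only point worth flagging is that one must use nonzerodivisors of $R$ rather than arbitrary elements, so that the cancellation $s\varphi(m)=0\Rightarrow \varphi(m)=0$ is legitimate in $R$ itself.
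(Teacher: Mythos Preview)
Your proof is correct. It is more direct than the paper's argument and avoids any auxiliary machinery: you simply use the definition of localization at the multiplicative set of nonzerodivisors to kill $\varphi(m)$ for each $m$ and each $\varphi\in\Hom_R(M,R)$.

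The paper proceeds slightly differently. It invokes the natural map $f:M\to M^{**}$ and the (previously recorded) inclusion $\ker(\theta)\subset\ker(f)$, where $\theta:M\to M_{(Q)}$ is the localization map. From $M_{(Q)}=\0$ one gets $M=\ker(\theta)\subset\ker(f)$, hence $f=0$, and then $\varphi(m)=f(m)(\varphi)=0$ for all $m$ and $\varphi$. The content is the same: the inclusion $\ker(\theta)\subset\ker(f)$ is proved by exactly the ``$sm=0$ with $s$ a nonzerodivisor'' argument you wrote down. So your version simply unpacks that black box in place, while the paper quotes it. Your approach has the advantage of being self-contained and makes transparent that no finiteness hypothesis on $M$ is needed; the paper's version has the (minor) advantage of fitting the lemma into the torsionless/bidual framework used later.
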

\begin{proof}
Let $Z(R)$ be the set of zero divisors of $R$, i.e., $Z(R)=\cup_{P\in {\rm Ass}(R)}P$. As $M_Q=\0$, we have $M_P=\0$ for every $P\in {\rm Ass}(R)$, which implies ${\rm Ann}(M)\nsubseteq Z(R)$. Thus, there exists $x\in {\rm Ann}(M)$ such that $x\notin Z(R)$. Therefore $0<\grade(\Ann(M),R)=\grade(M)=\min\{i\in\N:\Ext^i_R(M,R)\neq\0\}$. We conclude $\Ext_R^0(M,R)=\0$.
\end{proof}

\begin{thm}\label{k torsion}
Let $R$ be a Noetherian local ring with total quotient ring $Q$. Let $M$ be a finite $R$-module with a finite projective resolution 
$$\0\rightarrow P_1\stackrel{\varphi}\rightarrow P_0\rightarrow M\rightarrow \0.$$ 
Let $k$ be a positive integer. Then $M$ is $k$-torsion free if and only if $\depth(R_{\p})\geq k+1$, for any $\p\in \Supp(D(M))$. Moreover, if $R$ is Cohen-Macaulay, $M$ is $k$-torsion free if and only if $\codim(R/\p)\geq k+1$, for any $\p\in\Supp(D(M))$.
\end{thm}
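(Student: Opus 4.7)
The plan is to translate ``$M$ is $k$-torsion free'' into a grade statement on $D(M) := \Coker(\varphi^*)$, and then feed that into the formulas (\ref{grade-depth}) and (\ref{depth-codim}) already recalled. More precisely, I would show that, under the standing hypothesis that $Q$ is semisimple, $M$ is $k$-torsion free if and only if $\Ext^i_R(D(M),R) = \0$ for every $i \in \{0,1,\dots,k\}$, i.e.\ if and only if $\grade(D(M)) \geq k+1$.

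The direction ``$\Leftarrow$'' is immediate from the definition. For ``$\Rightarrow$'', $k$-torsion freeness already yields vanishing of $\Ext^i_R(D(M),R)$ for $i \in \{1,\dots,k\}$, so the only thing to check is $\Ext^0_R(D(M),R) = \0$. By Lemma \ref{tor0-ext}, this reduces to showing $D(M)_{(Q)} = \0$. Here I would use the semisimplicity of $Q$: every $Q$-module is projective, so localizing the given resolution $\0 \to P_1 \stackrel{\varphi}\rightarrow P_0 \to M \to \0$ at $Q$ produces a split short exact sequence. Thus $\varphi_{(Q)}$ is a split monomorphism and its dual is a split epimorphism. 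Since the $P_i$ are finite free, the formation of the $R$-dual commutes with localization, so $\varphi^*_{(Q)} = (\varphi_{(Q)})^*$ is surjective, and hence $D(M)_{(Q)} = \Coker(\varphi^*)_{(Q)} = \Coker(\varphi^*_{(Q)}) = \0$, as needed.

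Once the equivalence ``$M$ is $k$-torsion free $\Leftrightarrow$ $\grade(D(M)) \geq k+1$'' is in hand, formula (\ref{grade-depth}) immediately gives the first claim: $\grade(D(M)) \geq k+1$ precisely when $\depth(R_\p) \geq k+1$ for every $\p \in \Supp(D(M))$. For the ``moreover'' statement, the Cohen-Macaulay hypothesis on $R$ lets one replace $\depth(R_\p)$ by $\codim(R/\p)$ via (\ref{depth-codim}), since $R_\p$ inherits the Cohen-Macaulay property. (The edge case $D(M)=\0$, where $\Supp(D(M)) = \emptyset$, makes the condition vacuous and $M$ automatically $k$-torsion free for every $k$, consistently with $\grade(\0) = +\infty$.)

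The only nontrivial step is the verification $D(M)_{(Q)} = \0$; everything else is bookkeeping with the quoted identities. The key idea in that step is combining two standard facts — a split injection dualizes to a split surjection, and dualization of finite free modules commutes with localization — with the splitting of the localized resolution forced by the semisimplicity of $Q$. I do not anticipate any further technical obstruction.
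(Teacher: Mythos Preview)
Your proposal is correct and follows essentially the same route as the paper: both reduce to showing $D(M)_{(Q)}=\0$, then invoke Lemma~\ref{tor0-ext} together with (\ref{grade-depth}) and (\ref{depth-codim}). The only cosmetic difference is that the paper obtains $D(M)_{(Q)}=\0$ by identifying $D(M)$ with $\Ext^1_R(M,R)$ and using that $\Ext$ commutes with localization, whereas you localize the resolution directly and dualize the resulting split injection; these are two phrasings of the same computation.
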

\begin{proof}
Using the projective resolution of $M$ and the definition of $\Ext_R^1(M,R)$, it follows that $\Ext_R^1(M,R)=D(M)$. As the functor $\Ext_R^1(\cdot,\cdot)$ commutes with localization, we obtain
\begin{align}
D(M)\otimes_R Q&=\Ext_R^1(M,R)\otimes_R Q\notag\\
&\cong \Ext^1_{R\otimes_R Q}(M\otimes_R Q, R\otimes_R Q)\notag\\
&=\Ext_Q^1(M_Q,Q).\notag
\end{align}
Since $Q$ is the total quotient ring of $R$, any non-unit of $Q$ is a zero divisor of $Q$, so $\depth(Q)=0$. Moreover $\dimproj(M_Q)\leq \dimproj(M)\leq 1$, the last inequality is by hypothesis. Using the Auslander-Buchsbaum formula 
$$0=\depth(Q)=\dimproj(M_Q)+\depth(M_Q),$$ 
we conclude $\dimproj(M_Q)=0$, so $M_Q$ is projective. It follows that $\0=\Ext_Q^1(M_Q,R)\cong D(M)\otimes_R Q=D(M)_{Q}$. By lemma \ref{tor0-ext}, $\Ext^{0}_R(D(M),R)=\0$. It follows that $M$ is $k$-torsion free if and only if $\Ext^i_R(D(M),R)=\0$ for every $i\in\{0,\dots, k\}$. 

On the other hand, $\Ext^i_R(D(M),R)=\0$ for every $i\in\{0,\dots, k\}$ if and only if $\grade(D(M))\geq k+1$. By (\ref{grade-depth}), $\grade(D(M))\geq k+1$ if and only if $\depth(R_p)\geq k+1$ for every $\p\in\Supp(D(M))$. If, in addition, $R$ is Cohen-Macaulay, by (\ref{depth-codim}), $\depth(R_{\p})\geq k+1$ if and only if $\codim(R/\p)\geq k+1$ for every $\p\in\Supp(D(M))$.
\end{proof}


\section{A characterization of $k$-torsion freeness}

Now we are ready to generalize Theorem \ref{thm BD} for any $k\geq1$. Throughout this section we use the 
following notation:

\begin{itemize}
\item $\K$ is a perfect field.
\item $A=\Pol/\langle f \rangle$, where $f\in\Pol$ is irreducible.
\item $W=\Sp(A)$.
\item $R$ is the local ring of a closed point $P\in W$.
\end{itemize}

Our first goal is to describe the support of the module $\Ext^1_{R}(\Omk,R)$ in terms of the singular locus of $R$. First we
need the following criterion of regularity for hypersurfaces in terms of the module of differentials of high order. 

\begin{prop}\label{crit reg}
Let $\p\in W$. Then $\Ap$ is a regular ring if and only if $\Oma$ is a free $\Ap$-module. In addition, in this case the rank of $\Oma$ is $L-1$, where 
$L=\binom{s-1+n}{s-1}$.
\end{prop}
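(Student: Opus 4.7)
The plan is to combine a regularity criterion in terms of freeness of the module of high-order differentials (established for hypersurfaces in \cite{BD} and in a more general context in \cite{BJNB}) with a computation of the generic rank of $\Omega^{(n)}_{A/\K}$.

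First I would invoke the criterion from \cite{BD}, \cite{BJNB}: for the hypersurface $A$, the localization $\Ap$ is a regular local ring if and only if $\Oma$ is a free $\Ap$-module. This reduces the proposition to the question of which rank occurs when freeness holds, in both directions.

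The rank is pinned down by a generic computation. Since $f$ is irreducible, $A$ is a domain whose fraction field is the function field $K(W)=A_{(0)}$; because high-order K\"ahler differentials commute with localization,
\[
\Oma\otimes_{\Ap}K(W)\;\cong\;\Omega^{(n)}_{K(W)/\K}.
\]
As $\K$ is perfect, the extension $K(W)/\K$ is separable of transcendence degree $s-1$, so $K(W)$ is formally smooth over $\K$ of relative dimension $s-1$. A standard computation (identifying $\Omega^{(n)}$ of a smooth $\K$-algebra of relative dimension $d$ with $I/I^{n+1}$ for the diagonal ideal, and counting monomials of total degree between $1$ and $n$ in $d$ variables) yields
\[
\dim_{K(W)}\Omega^{(n)}_{K(W)/\K}\;=\;\binom{s-1+n}{n}-1\;=\;L-1.
\]

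Assembling the two ingredients: if $\Ap$ is regular then $\Oma$ is free, and since $\Ap$ is a domain its rank is forced to equal the generic rank $L-1$. Conversely, if $\Oma$ is free of rank $L-1$ then in particular it is free, and the criterion delivers regularity of $\Ap$. The main obstacle I anticipate is the generic-rank step: while formal smoothness of $K(W)/\K$ yields freeness of $\Omega^{(n)}_{K(W)/\K}$, pinning down the exact value $\binom{s-1+n}{n}-1$ requires either a direct Taylor/jet-space identification or a quotation of an ambient result. A self-contained alternative is to start from the fundamental exact sequence for $\Omega^{(n)}$ associated to the surjection $\Pol\twoheadrightarrow A$ and to track the generic contribution of the single defining relation $f$, obtaining $\binom{s+n}{n}-1-\binom{s-1+n}{n-1}=L-1$ by Pascal's identity.
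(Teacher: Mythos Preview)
Your rank computation via the generic point is clean and correct: tensoring with $K(W)$ and using that $K(W)/\K$ is separably generated of transcendence degree $s-1$ gives $\dim_{K(W)}\Omega^{(n)}_{K(W)/\K}=L-1$, and this forces the rank to be $L-1$ whenever $\Oma$ is free over the domain $\Ap$. This is a nice alternative to the paper's bookkeeping with the filtration $I^n/I^{n+1}$.

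The gap is in the backward implication. You invoke the criterion from \cite{BD} and \cite{BJNB} to conclude that freeness of $\Oma$ implies regularity of $\Ap$ for an \emph{arbitrary} prime $\p$. But, as the paper itself records (see the remark immediately following the proposition), both references establish this equivalence only at \emph{closed} points; extending it to non-closed points is precisely the content of the proposition you are proving. So in the converse direction you are assuming what is to be shown.

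The paper closes this gap by a spreading-out argument: if $\Oma$ is free of rank $L-1$, choose a principal open $D(g)\subset W$ containing $\p$ on which the sheaf $\Omw$ is free of rank $L-1$, pick a maximal ideal $\m$ of $A_g$ with $\p\subset\m$, apply the closed-point criterion of \cite[Theorem 3.1]{BD} at $\m$ to conclude that $A_\m$ is regular, and then deduce that $\Ap\cong(A_\m)_\p$ is regular as a localization of a regular local ring. Your generic-rank trick could replace the paper's separate verification that the rank on such an open set equals $L-1$, but it does not by itself supply the missing reduction to a closed point.
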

\begin{proof}
The ``only if" part is well-known and holds in full generality (see, for instance, \cite[Section 4.2]{LT}). We include the proof here for the sake of completeness.

If $\Ap$ is a regular ring then $\Omega^1_{\Ap/\K}$ is free of rank $s-1$. In addition, in this case,
$\textbf{S}^{n}(I_{\Ap}/I_{\Ap}^{2})=I_{\Ap}^{n}/I_{\Ap}^{n+1}$, where $\textbf{S}^{n}(\cdot)$ denotes the nth-symmetric product. 
It follows that $I_{\Ap}^{n}/I_{\Ap}^{n+1}$ is free of rank $\binom{s-1+n-1}{s-2}$. Using the exact sequences
$$0\rightarrow I_{\Ap}^{n}/I_{\Ap}^{n+1}\rightarrow I_{\Ap}/I_{\Ap}^{n+1}\rightarrow I_{\Ap}/I_{\Ap}^{n}\rightarrow0,$$
it follows by induction that $\Oma$ is free of rank $L-1$.

Now assume that $\Oma$ is a free $\Ap$-module. We first show that the rank of this module is $L-1$. Let $\Omw$ be the sheaf of K\"ahler differentials 
of order $n$ of $W$. By the assumption, there exists an open subset $U\subset W$ such that $\p\in U$ and $\Omw|_U$ is free. In particular, $(\Omw|_U)_{\q}\cong\Omega^{(n)}_{A_{\q}/\K}$ 
is a free $A_{\q}$-module for all $\q\in U$. Since $W$ is irreducible, $U$ is irreducible as well, and so the rank of $\Omega^{(n)}_{A_{\q}/\K}$ is constant in $U$. 
Let $\q'\in U\subset W$ be a non-singular point (it exists because $W$ is irreducible and so $U$ and the open subset of non-singular points 
of $W$ are dense). By the ``only if" part of the proposition, $\Omega^{(n)}_{A_{\q'}/\K}$ is free of rank $L-1$. It follows that the rank of $\Oma$ is also $L-1$.

Now we show that $\Oma$ free implies that $\Ap$ is regular.
We can assume that $U=D(g)\cong\Sp(A_g)$ is a principal open set. Since $A_g$ is commutative with unit, there
exists $\m\subset A_g$ a maximal ideal such that $\p\subset\m$ \cite[Corollary 1.4]{AM}. Then 
$\Ap \cong (A_g)_\p \cong ((A_g)_{\m})_{\p} \cong (A_{\m})_{\p}.$ Since $\m\in U$, it follows that $\Omega^{(n)}_{A_{\m}/\K}$ 
is free of rank $L-1$. By \cite[Theorem 3.1]{BD}, $\m$ being a maximal ideal implies that $A_\m$ is a regular ring . We conclude that $\Ap\cong(A_{\m})_\p$ is also a regular ring.
\end{proof}

\begin{rem}
It was proved in \cite[Theorem 10.2]{BJNB} that the previous criterion of regularity holds more generally for local domains $(R,\m,\mathbb{K})$ 
with pseudocoefficient field $K$ such that Frac$(R)$ is separable over $K$ and $\mathbb{K}$ is perfect. In particular, it holds for arbitrary irreducible varieties. 
In addition, an algebraic proof of the second part of the proposition can also be deduced from \cite[Proposition 2.20]{BJNB}.
\end{rem}

\begin{lem}\label{ext1 free}
Let $S$ be a Noetherian local ring and $M$ a finite $S$-module such that $\dimproj(M)\leq 1$. Then $\Ext_S^1(M,S)=\0$ if and only if 
$M$ is free.
\end{lem}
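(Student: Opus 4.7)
The ``if'' direction is immediate: a free module is projective, so $\Ext_S^1(M,S)=\0$.

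For the converse, my plan is to build a short free resolution of $M$ and exploit the vanishing of $\Ext^1$ via a splitting argument. Since $S$ is Noetherian local and $M$ is finite with $\dimproj(M)\leq 1$, I can choose a finite free cover $P_0\twoheadrightarrow M$; the kernel is finitely generated of projective dimension zero, hence projective, hence (being finite over a local ring) free. This yields a short exact sequence
$$\0\rightarrow P_1\stackrel{\varphi}\rightarrow P_0\rightarrow M\rightarrow \0$$
with $P_0,P_1$ finite free. Applying $\Hom_S(-,S)$ gives the exact sequence
$$\0\rightarrow M^*\rightarrow P_0^*\stackrel{\varphi^*}\rightarrow P_1^*\rightarrow \Ext_S^1(M,S)\rightarrow \0,$$
and the hypothesis $\Ext_S^1(M,S)=\0$ forces $\varphi^*$ to be surjective.

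Now I would use that $P_1^*$ is finite free, hence projective, so the surjection $\varphi^*:P_0^*\rightarrow P_1^*$ admits a section $\psi:P_1^*\rightarrow P_0^*$ with $\varphi^*\psi=\mathrm{id}_{P_1^*}$. Dualizing once more and using the canonical isomorphism between a finite free module and its bidual, I get a map $\psi^*:P_0\rightarrow P_1$ satisfying $\psi^*\varphi=\mathrm{id}_{P_1}$. Thus $\varphi$ is a split monomorphism, the short exact sequence splits, and $P_0\cong P_1\oplus M$. In particular $M$ is a direct summand of a finite free module over the local ring $S$, so $M$ is projective and therefore free. No single step here looks like a genuine obstacle; the only subtlety to double-check is the passage from ``projective and finite over $S$ local'' to ``free,'' which is standard (Kaplansky, or Nakayama applied to a minimal generating set).
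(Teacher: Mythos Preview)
Your proof is correct and follows the same opening moves as the paper: build a short free resolution $\0\to P_1\stackrel{\varphi}{\to}P_0\to M\to\0$, dualize, and use the vanishing of $\Ext^1_S(M,S)$ to see that $\varphi^*$ is surjective, hence split since $P_1^*$ is free. The divergence comes at the end. You dualize the section $\psi$ and use the biduality isomorphism for finite free modules to obtain a retraction $\psi^*$ of $\varphi$ itself, so the \emph{original} resolution splits and $M$ is a direct summand of $P_0$. The paper instead concludes only that $M^*$ is free (as a summand of $P_0^*$), hence $M^{**}$ is free, and then invokes the Auslander--Bridger exact sequence $\0\to\Ext^1_S(D(M),S)\to M\to M^{**}\to\Ext^2_S(D(M),S)\to\0$ with $D(M)=\Coker(\varphi^*)=\0$ to get $M\cong M^{**}$. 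Your argument is a bit more self-contained, since it avoids appealing to that four-term sequence; the paper's version has the mild advantage of echoing machinery already set up in the preceding section. Both are standard and equally valid.
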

\begin{proof}
Suppose that $\Ext_S^1(M,S)=\0$, so $\Ext_S^1(M,F)=\0$ for any $S$-free module $F$. As $\dimproj(M)\leq 1$ there exists an exact sequence
\begin{equation}\label{seq pd1}
\0\rightarrow F_1\stackrel{\varphi}\rightarrow F_0\rightarrow M\rightarrow\0,
\end{equation}
where $F_0$ and $F_1$ are finite free $S$-modules. Thus,
\begin{equation}
\0=\Ext_S^1(M,F_1)=\Coker\left(\begin{array}{clc}\Hom(F_0,F_1)&\rightarrow& \Hom(F_1,F_1)\\f&\mapsto&f\varphi\end{array}\right).
\end{equation}
Therefore, there exists $f\in \Hom(F_0,F_1)$ such that $f\varphi={\rm id}_{F_1}$; then $\varphi$ splits and $F_0\cong M\oplus F_1$. Thus $M$ is projective and since $S$ is a Noetherian local ring we conclude that $M$ is free.

The converse of this lemma is immediate, because $M$ is projective if and only if $\Ext_S^1(M,N)=\0$ for every $S$-module $N$.
\end{proof}

The next corollary follows the line of the proof of  \cite[Proposition 5.2]{L}. The crucial additions are proposition \ref{crit reg} and the 
fact that $\Omk$ has projective dimension less or equal than 1.

\begin{cor}\label{sing}
With the established notation, $$\Supp(\Ext^1_{R}(\Omk,R))=\Sing(R).$$
\end{cor}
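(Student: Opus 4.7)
The plan is to deduce the equality of sets by showing, for each prime $\p \in \Sp(R)$, that $\p$ lies in the support of $\Ext^1_R(\Omk, R)$ if and only if $R_\p$ fails to be regular. The three tools I want to combine are: the projective dimension bound $\dimproj_R(\Omk) \leq 1$ recalled from \cite{BD}, Lemma \ref{ext1 free} applied to the local ring $R_\p$, and the regularity criterion of Proposition \ref{crit reg}.

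First I would reduce everything to localizations. Since $R = A_P$ for the closed point $P \in W$, primes $\p \in \Sp(R)$ correspond bijectively with primes $\q \in \Sp(A)$ contained in $P$, and $R_\p \cong A_\q$. Because the module of high order K\"ahler differentials is compatible with localization, $(\Omk)_\p \cong \Omega^{(n)}_{A_\q/\K}$. The bound $\dimproj_R(\Omk) \leq 1$ passes to the localization, and $\Ext^1$ commutes with localization for finitely generated modules over a Noetherian ring, giving
$$\Ext^1_R(\Omk, R)_\p \cong \Ext^1_{R_\p}((\Omk)_\p, R_\p).$$
Lemma \ref{ext1 free} applied to $S = R_\p$ and $M = (\Omk)_\p$ then says that this Ext module vanishes if and only if $(\Omk)_\p$ is free over $R_\p$. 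Hence $\p \in \Supp(\Ext^1_R(\Omk, R))$ exactly when $(\Omk)_\p$ is not free, and Proposition \ref{crit reg} applied to $\q$ translates non-freeness of $\Omega^{(n)}_{A_\q/\K}$ into non-regularity of $A_\q = R_\p$, which is exactly the condition $\p \in \Sing(R)$.

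The main technical point I expect to confront is rank bookkeeping in the last step: Proposition \ref{crit reg} characterizes regularity by freeness of rank exactly $L-1$, so I need to verify that whenever $(\Omk)_\p$ is free its rank is automatically $L-1$. I would handle this by localizing to the generic point: since $A$ is a domain (as $f$ is irreducible), $A_{(0)}$ is a field and hence regular, so Proposition \ref{crit reg} forces the generic rank of $\Omk$ to be $L-1$; any $\p$ at which the module becomes free must then see the same rank by further localizing to $(0)$. With this observation the two equivalences chain together to yield $\Supp(\Ext^1_R(\Omk, R)) = \Sing(R)$.
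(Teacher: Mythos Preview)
Your proposal is correct and follows essentially the same route as the paper: both arguments localize, invoke the compatibility of $\Ext^1$ and $\Omega^{(n)}$ with localization, use the bound $\dimproj_R(\Omk)\leq 1$ together with Lemma~\ref{ext1 free} to equate vanishing of $\Ext^1$ with freeness, and then appeal to Proposition~\ref{crit reg}. The one place you diverge is the rank verification: the paper spreads the free locus to an open set $U\subset W$ and picks a non-singular point of $U$ to read off the rank $L-1$, whereas you localize further to the generic point $(0)$ and use that $A_{(0)}$ is a field (hence regular) to pin down the rank---your version is a bit more direct and avoids the sheaf-theoretic detour.
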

\begin{proof}
Let $\p\in\Sp(R)$ be such that $R_{\p}$ is regular. Then $\Omega^1_{R_{\p}/\K}$ is a free $R_{\p}$-module and
so the same is true for $\Omega^{(n)}_{R_{\p}/\K}$. Since the module of differentials of high order commutes with localization 
(\cite[Theorem II-9]{N}), lemma \ref{ext1 free} implies
$$\0=\Ext^1_{R_{\p}}(\Omega^{(n)}_{R_{\p}/\K},R_{\p})=\big(\Ext^1_{R}(\Omk,R)\big)_{\p}.$$
This shows that $\Supp(\Ext^1_{R}(\Omk,R))\subset\Sing(R)$. 

Now let $\p\in\Sp(R)$ be such that $(\Ext^1_{R}(\Omk,R))_{\p}=\0$. This implies 
$\Ext^1_{R_{\p}}(\Omega^{(n)}_{R_{\p}/\K},R_{\p})=\0$. By \cite[Theorem 4.3]{BD}, $\dimproj(\Omk)\leq1$. Thus, by lemma
\ref{ext1 free}, $\Omega^{(n)}_{R_{\p}/\K}$ is a free $R_{\p}$-module. On the other hand, by the correspondence of prime ideals
in $R$ and $A$, we have $R_{\p}\cong \Ap$. In particular, $\Omega^{(n)}_{\Ap/\K}$ is a free $\Ap$-module. By proposition \ref{crit reg},
$\Ap$ is a regular ring. Thus $R_{\p}$ is regular and so $\Sing(R)\subset\Supp(\Ext^1_{R}(\Omk,R))$. 
\end{proof}

\begin{thm}\label{thm kBD}
Let $k\geq1$. Then $\Omk$ is $k$-torsion free if and only if $W$ is non-singular in codimension $k$ at $P$.
\end{thm}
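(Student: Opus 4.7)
The plan is to view Theorem \ref{thm kBD} as an immediate consequence of the general $k$-torsion characterization in Theorem \ref{k torsion}, applied to $M=\Omk$, combined with the support computation carried out in Corollary \ref{sing}. All the substantive work has already been done: Theorem \ref{k torsion} translates $k$-torsion freeness into a condition on the codimension of primes in a certain support, and Corollary \ref{sing} identifies that support with $\Sing(R)$. What remains is essentially bookkeeping.

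First I would check that the hypotheses of Theorem \ref{k torsion} are satisfied in our setting. Since $f\in\Pol$ is irreducible, $A=\Pol/\langle f\rangle$ is an integral domain, and hence so is the localization $R$ at the closed point $P$; in particular the total ring of fractions $Q$ of $R$ is a field, trivially semisimple. Moreover $A$ is a complete intersection inside the regular ring $\Pol$ (cut out by a single non-zero-divisor), so $A$, and hence $R$, is Cohen--Macaulay. The final ingredient is the recent fact \cite[Theorem 4.3]{BD} that $\dimproj_R(\Omk)\leq 1$, which furnishes a length-one free resolution
$$\0\rightarrow P_1\stackrel{\varphi}{\rightarrow}P_0\rightarrow\Omk\rightarrow\0.$$

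Now apply Theorem \ref{k torsion} in its Cohen--Macaulay form to $M=\Omk$: the module $\Omk$ is $k$-torsion free if and only if $\codim(R/\p)\geq k+1$ for every $\p\in\Supp(\Coker(\varphi^*))$. From the above resolution one identifies $\Coker(\varphi^*)\cong\Ext^1_R(\Omk,R)$, and Corollary \ref{sing} gives $\Supp(\Ext^1_R(\Omk,R))=\Sing(R)$. Combining these two facts, $\Omk$ is $k$-torsion free if and only if $\codim(R/\p)\geq k+1$ for every $\p\in\Sing(R)$, which is precisely the non-singularity condition at $P$ appearing in the statement (read off from the definition of \emph{non-singular in codimension $i$} recorded after Theorem \ref{thm L}). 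The only genuine obstacle here was proving the two prerequisite results --- Theorem \ref{k torsion} and Corollary \ref{sing} --- both of which are already established; nothing further is needed to close out the proof.
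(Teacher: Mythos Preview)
Your proposal is correct and follows essentially the same approach as the paper's proof: apply Theorem \ref{k torsion} (using $\dimproj_R(\Omk)\leq 1$ and that $R$ is Cohen--Macaulay), identify $\Coker(\varphi^*)$ with $\Ext^1_R(\Omk,R)$, and invoke Corollary \ref{sing} to replace its support by $\Sing(R)$. The only difference is that you spell out the verification of the hypotheses of Theorem \ref{k torsion} (semisimplicity of $Q$, Cohen--Macaulayness of $R$) more explicitly than the paper does.
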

\begin{proof}
As before, $\dimproj(\Omk)\leq1$. Consider the following projective resolution of $\Omk$:
$$\0\rightarrow F_1\stackrel{\varphi}\rightarrow F_0\rightarrow \Omk\rightarrow\0.$$
Since $R$ is Cohen-Macaulay, we can apply theorem \ref{k torsion} to obtain that $\Omk$ is $k$-torsion free if and only if
$\codim(R/\p)\geq k+1$ for any $\p\in\Supp(\Coker(\varphi^*))$. In addition, using the previous exact sequence we obtain
$\Ext^1_{R}(\Omk,R)=\Coker(\varphi^*)$. By corollary \ref{sing} we conclude that $\Omk$ is $k$-torsion free if and only 
if $\codim(R/\p)\geq k+1$ for any $\p\in\Sing(R)$.
\end{proof}

\begin{rem}
Notice that the entire strategy to prove theorem \ref{thm kBD} can also be used to generalize Lipman's theorem \ref{thm L} for 
$k$-torsion, for any $k\geq1$. 
\end{rem}

\begin{rem}
One of the key ingredients of the proof of Theorem \ref{thm kBD} was the fact that $\dimproj(\Omk)\leq1$, where $R$ is a local ring 
of an irreducible hypersurface. If this fact were also true for reduced complete intersections, then exactly the same strategy would give the analogous
statement of Theorem \ref{thm kBD} in this case. In this regard, an explicit presentation of $\Omn$ was recently given in 
\cite[Theorem 2.8]{BD} for any finitely generated $K$-algebra $R$. Using this presentation one could try to compute the projective 
dimension of $\Omn$, at least in some examples of complete intersections. Unfortunately, due to the size of the matrix giving the 
presentation, we did not succeed in computing any example for $n>1$, even with the help of a (modest) computer.
\end{rem}

\begin{rem}
Even though the main goal of this paper was to generalize theorem \ref{thm BD}, the results presented in section \ref{sect k tors} apply
to more general modules satisfying, among other hypothesis, that their projective dimension is less or equal than one. Families of modules
satisfying this hypothesis can be constructed as in \cite[Remark 2.1]{V}, \cite[Lemma 1]{L2}, or \cite[Proposition 1.6]{OZ}.
\end{rem}

\section*{Acknowledgements}

We want to thank the referee for her/his comments that improved the presentation of the paper and for the suggestions to simplify some of our proofs.

\end{document}